\numberwithin{equation}{section} \textwidth=17.5cm
\newtheorem{theorem}{Theorem}[section]
\newtheorem{lemma}[theorem]{Lemma}
\newtheorem{corollary}[theorem]{Corollary}
\newtheorem{remark}[theorem]{Remark}
\numberwithin{equation}{section}
\begin{document}

\title[Self-adjoint cyclically
compact operators]{Self-adjoint cyclically compact operators and
their applications}

\author{Farrukh Mukhamedov  and Karimbergen Kudaybergenov}

\address[Farrukh Mukhamedov]{Department of Computational \& Theoretical Sciences\\
Faculty of Science, International Islamic University Malaysia\\
P.O. Box, 141, 25710, Kuantan\\
Pahang, Malaysia} \email{{\tt far75m@yandex.ru} {\tt
farrukh\_m@iium.edu.my}}

\address[Karimbergen Kudaybergenov]{Department of Mathematics, Karakalpak state
university,  230113 Nukus, Uzbekistan.} \email{karim2006@mail.ru}

\maketitle
\begin{abstract}
This paper is devoted to self-adjoint cyclically compact operators
on Hilbert--Kaplansky module over a ring of bounded measurable
functions. The spectral theorem for such a class of operators are
given. We apply this result to  partial integral equations on the
space with mixed norm of measurable functions and to compact
operators relative to von Neumann algebras. We will give a condition
of solvability of  partial integral equations with self-adjoint
kernel. Moreover, a general form of  compact operators relative to a
type I von Neumann algebra is given.

 \vskip 0.3cm \noindent

\textit{Keywords:} compact operator; cyclically compact operator;
partial integral equation;  von Neumann algebra.
\\

{\it AMS Subject Classification:} 47B07; 47C15; 45P05.
\end{abstract} \maketitle

\bigskip

\section{Introduction}

\medskip

The modern structure theory of $AW^\ast$-modules originated with
the articles by I. Kaplansky \cite{Kap, Kap2} and nowadays this
theory  has many applications in the operator algebras.

One of the important instrument in the theory of operator algebras
is a different form spectral theorem. In \cite{Wri} it was proved an
important spectral theorem for Hilbert--Kaplansky modules. Another
important concept is the compactness property. Cyclically compact
sets and operators in lattice-normed spaces were introduced by
Kusraev (see \cite{Ku85}).  In \cite{Ku85} (see also \cite{KusDom})
a general form of cyclically compact operators in Hilbert--Kaplansky
module, as well as a variant of Fredholm alternative for cyclically
compact operators, are given. In \cite{GK01} it was proved that
every cyclically compact operator acting in Banach--Kantorovich
space over a ring measurable functions can be represented as a
measurable bundle of compact operators acting in Banach spaces. In
\cite{Alb2} it has been shown that the algebra of all locally
measurable operators with respect to a type I von Neumann algebra
can be represented as an algebra of all bounded module-linear
operators acting on a Hilbert–-Kaplanskiy module over the ring of
measurable function on a measure space. This result played  a
crucial role in the description of derivations on algebras of
locally measurable operators with respect to type I von Neumann
algebras and their subalgebras (see for example, \cite{Alb2, AK10,
AK1, BPS}).

It is well-known that one of the important notions in the theory of
operator algebras is  compact operators  relative to von Neumann
algebras with faithful semi-finite trace introduced by M. G. Sonis
1971 (see \cite{Son}). Later V. Kaftal \cite{Kaf} has shown that
Sonis's definition of compact operator relative to von Neumann
algebras with faithful semi-finite trace is viable for general von
Neumann algebras too and he obtained most of the classical
characterizations of compact operators.

In this paper we shall investigate self-adjoint cyclically compact
operators on Hilbert--Kaplansky module over a ring of bounded
measurable functions and their applications.

In Section 2 we give preliminaries from the theory of
Hilbert--Kaplansky module and give a general form of self-adjoint
cyclically compact operators on Hilbert--Kaplansky module over a
ring measurable functions.

In section 3 we  apply the above theorem  to partial integral
equations on the space with mixed norm of measurable functions. We
will give a condition of solvability of  partial integral
equations with self-adjoint kernel.

In section 4 we will give a general form  a self-adjoint compact
operator relative to  a type I von Neumann algebra.

\section{Self-adjoint cyclically compact operators on Hilbert--Kaplansky modules}

Let us recall some notions and results from the theory of
Hilbert--Kaplansky modules (see \cite{KusDom}).

Let $(\Omega, \Sigma, \mu)$  be a measurable space and suppose
that the measure $\mu$ has the direct sum property, i.e. there is
a family $\{\Omega_i\}_{i\in J}\subset\Sigma,$
$0<\mu(\Omega_i)<+\infty, i\in J$ such that for any $A \in \Sigma,
\mu(A)<+\infty$ there exist a countable subset $J_0\subset  J$ and
a set $B$ with zero measure such that $A = \bigcup\limits_{i\in
J_0}(A\cap \Omega_i)\cup B.$

We denote  by $L^{\infty}(\Omega)$ the algebra of all (equivalence
classes of) complex measurable bounded functions on $\Omega$ and
let $\nabla$ be the set of all idempotents of the algebra
$L^\infty(\Omega).$

Let  $X$ be a  unitary $L^\infty(\Omega)$-module. The mapping
$\langle\cdot,\cdot\rangle:X\times X\rightarrow L^\infty(\Omega)$
is a $L^\infty(\Omega)$-valued inner product, if for all $\xi,
\eta, \zeta\in  X$ and $a\in L^\infty(\Omega)$  the following are
satisfied:
\begin{itemize}
\item[(1)] $\langle \xi,\xi\rangle\geq 0, \langle \xi,\xi\rangle= 0 \Leftrightarrow \xi=0;$
\item[(2)] $\langle \xi,\eta\rangle=\overline{\langle \eta,\xi\rangle};$
\item[(3)] $\langle a\xi,\eta\rangle=a\langle \xi,\eta\rangle;$
\item[(4)] $\langle \xi+\eta,\zeta\rangle=\langle \xi,\zeta\rangle+
\langle \eta,\zeta\rangle.$
\end{itemize}

Using a $L^\infty(\Omega)$-valued inner product, we may introduce
the norm in $X$ by the formula
$$
\|\xi\|_{\infty}=\sqrt{\|\langle\xi,\xi\rangle\|_{L^{\infty}(\Omega)}},
$$
and the vector norm
$$
\|\xi\|=\sqrt{\langle\xi,\xi\rangle}.
$$
A Hilbert--Kaplansky module over $L^\infty(\Omega)$ is a unitary
module over $L^\infty(\Omega)$ such  that it is complete with
respect the norm $\|\cdot\|_\infty$ and the following two
properties are true:
\begin{itemize}
\item[(1)] let $\xi$ be an arbitrary element in $X,$ and let $\{\pi_i\}_{i\in I}$
be a partition of unity in $\nabla$ with $\pi_i\xi=0$ for all
$i\in I,$ then $\xi = 0;$
\item[(2)] let $\{\xi_i\}_{i\in I}$ be a norm-bounded family in $X,$ and let
$\{\pi_i\}_{i\in I}$ be a partition of unity in $\nabla,$ then
there exists an element $\xi\in X$ such that $\pi_i \xi =
\pi_i\xi_i$ for all $i\in I.$
\end{itemize}

An orthogonal basis in a Hilbert--Kaplansky module $X$ over
$L^\infty(\Omega)$ is a orthogonal set whose orthogonal complement
is $\{0\}.$ A Hilbert--Kaplansky module $X$ is said to be
$\alpha$-\textit{homogeneous,} if $\alpha$ is a cardinal and $X$
has a basis of cardinality $\alpha.$

A Hilbert--Kaplansky module $X$ is said to be
$\sigma$-\textit{finite-generated} if  there exists a partition of
unity $\{\pi_n\}_{n\in F}$  in $\nabla,$ where $F\subseteq
\mathbb{N},$ such that $\pi_n X$ is a $n$-homogeneous module over
$\pi_n L^\infty(\Omega)$  for all $n\in F.$

Let  $C$ be a subset in $X.$ Denote by  $\mbox{mix}(C)$ the set of
all vectors $\xi$ from $X$ for which there is a partition of unity
$\{\pi_i\}_{i\in I}$ in $\nabla$  such that $\pi_i \xi\in C$ for
all $i\in I,$ i.e.
$$
\mbox{mix}(C)=\left\{\xi\in X: \exists\, \pi_i\in \nabla,
\pi_i\pi_j=0, i\neq j, \bigvee\limits_{i\in I}\pi_i=\textbf{1},
\pi_i \xi\in C, i\in I\right\}.
$$
In other words $\mbox{mix}(C)$ is the set of all mixings obtained
by families $\{\xi_i\}_{i\in I}$ taken from $C.$

A subset $C$ is said to be \textit{cyclic} if $C=\mbox{mix}(C).$

For  a nonempty set $A$  by  $\nabla(A)$ denotes the set of all
partitions of unity in $\nabla$  with the index set $A,$ i.e.,
$$
\nabla(A)=\left\{\nu:A\rightarrow \nabla:(\forall\,
\alpha,\beta\in A)(\alpha\neq\beta\rightarrow
\nu(\alpha)\wedge\nu(\beta)=0)\wedge\bigvee\limits_{\alpha\in
A}\nu(\alpha)=\mathbf{1}\right\}.
$$
If $A$ is a partially ordered set then we can order the set
$\nabla(A)$ as:
$$
\nu\leq\mu\leftrightarrow (\forall\, \alpha,\beta\in
A)(\nu(\alpha)\wedge\nu(\beta)\neq0\rightarrow
\alpha\leq\beta)\,\, (\nu,\mu\in \nabla(A)).
$$
Then this relation is  a partial order in $\nabla(A),$ in
particularly, if  $A$ is directed upward or downward, then so does
$\nabla(A).$

Take any net $(\xi_\alpha)_{\alpha\in A}$  in $X.$  For each
$\nu\in \nabla(A)$  put $\xi_\nu=\mbox{mix}_{\alpha\in A}
\nu(\alpha)\xi_\alpha.$ If all the mixings exist then we have a
net $(\xi_\nu)_{\nu\in \nabla(A)}$ in $X.$ Every subnet of the net
$(\xi_\nu)_{\nu\in \nabla(A)}$  is called a cyclical subnet of the
original net $(\xi_\alpha)_{\alpha\in A}.$

Recall \cite{KusDom} that a subset $C\subset X$ is said to be
\textit{cyclically compact} if $C$ is cyclically  and any sequence
in $C$ has a cyclic subsequence that norm converges to some element
of $C.$ A subset in $X$ is called \textit{relatively cyclically
compact} if it is contained in a cyclically compact set.

An operator $T$ on $X$  is called
\textit{$L^\infty(\Omega)$-linear} if $T(a\xi+b\eta)=a
T(\xi)+bT(\eta)$ for all $a, b\in L^\infty(\Omega),$ $\xi, \eta\in
X.$

A $L^\infty(\Omega)$-linear operator $T$ on $X$  is called
\textit{cyclically compact}  if the image $T(C)$ of any bounded
subset $C\subset X$ is relatively cyclically compact.

For every $\xi, \eta\in X$ we define an operator $\xi\otimes \eta$
on $X$ by the rule
$$
(\xi\otimes\eta)(\zeta)=\langle\zeta, \eta\rangle\xi,\, \zeta\in
X.
$$

It is well-known  \cite[Theorem 8.5.6]{KusDom} that if $T$  is  a
cyclically compact operator on $X$ then there exists a partition of
unity $\{\pi_0, \pi_1, \ldots, \pi_k,\ldots, \pi_\infty\}$ in
$\nabla$ and  orthonormal system $\{\xi_{k, n}\}_{n=1}^{k},$
$\{\eta_{k, n}\}_{n=1}^k$ in $\pi_k X$ and a families $\{f_{k,
n}\}_{n=1}^k$ in $\pi_kL^\infty(\Omega),$ where $k=1, \ldots,
n,\ldots, \infty,$ such that the followings are true:
\begin{enumerate}
\item[(1)] $\pi_0 T=0;$
\item[(2)] $\pi_\infty f_{\infty, n}\downarrow 0;$
\item[(3)] the representation is valid
\begin{equation}\label{deco}
T=\pi_\infty\sum\limits_{n=1}^\infty f_{\infty, n}\, \xi_{\infty,
n} \otimes \eta_{\infty, n}+\sum\limits_{k=1}^\infty \pi_k
\sum\limits_{n=1}^k f_{k, n}\, \xi_{k,n} \otimes \eta_{k,n}.
\end{equation}
\end{enumerate}

The following is the main result of this section.

\begin{theorem}\label{MA}
If $T$  is  a self-adjoint cyclically compact operator on $X$ then
there are partition of unity $\{\pi_0, \pi_1, \ldots,
\pi_k,\ldots, \pi_\infty\}$ in $\nabla$ and  orthonormal families
$\{\xi_{k, n}\}_{n=1}^{k}$  in $\pi_k X$ and a families $\{f_{k,
n}\}_{n=1}^k$ in $\pi_kL^\infty(\Omega),$ where $k=1, \ldots,
n,\ldots, \infty,$ such that the following hold:
\begin{enumerate}
\item[(1)] $\pi_0 T=0;$
\item[(2)] $\pi_\infty |f_{\infty, n}|\downarrow 0;$
\item[(3)] the representation is valid
\begin{equation}\label{selfd}
T=\pi_\infty\sum\limits_{n=1}^\infty f_{\infty, n}\, \xi_{\infty,
n} \otimes \xi_{\infty, n}+\sum\limits_{k=1}^\infty \pi_k
\sum\limits_{n=1}^k f_{k, n}\, \xi_{k,n} \otimes \xi_{k,n}.
\end{equation}
\end{enumerate}
\end{theorem}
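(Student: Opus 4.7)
The plan is to combine the general decomposition (\ref{deco}) with the measurable bundle realization of the Hilbert--Kaplansky module $X$ (as developed in \cite{GK01, KusDom}) to reduce the statement to the classical spectral theorem for self-adjoint compact operators on a complex Hilbert space, applied fiberwise.

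First, since $T$ is cyclically compact, its range is contained in a $\sigma$-finite-generated sub-$L^\infty(\Omega)$-module, and a partition $\{\pi_k\}_{k\in F\cup\{\infty\}}$ is provided by the theorem recalled just before the statement. On each band $\pi_k X$ with $k<\infty$, the module is $k$-homogeneous and hence realizable as a measurable bundle of $k$-dimensional Hilbert spaces; the band $\pi_\infty X$ realizes as a measurable bundle of separable Hilbert spaces. By the main result of \cite{GK01}, the operator $T$ becomes a measurable field $\omega\mapsto T(\omega)$ of compact operators on these fibers. Self-adjointness of $T$ transfers pointwise, since the involution $(\xi\otimes\eta)^{\ast}=\eta\otimes\xi$ is compatible with the bundle realization, so each $T(\omega)$ is a self-adjoint compact operator on an ordinary Hilbert space.

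Second, I apply the classical Hilbert-space spectral theorem fiberwise: for each $\omega$, choose an orthonormal basis of $\overline{\mathrm{range}\,T(\omega)}$ consisting of eigenvectors with real eigenvalues arranged as $|\lambda_1(\omega)|\geq |\lambda_2(\omega)|\geq\cdots\downarrow 0$ (a finite list on the bands $\pi_k$ with $k<\infty$). Refining $\{\pi_k\}$ so that the rank of $T(\omega)$ is constant on the support of each $\pi_k$, a measurable selection argument yields real $f_{k,n}\in \pi_k L^{\infty}(\Omega)$ and orthonormal $\xi_{k,n}\in \pi_k X$ that realize the fiberwise spectral data. The identity (\ref{selfd}) then holds $\omega$-a.e.\ and, since both sides are $\|\cdot\|_{\infty}$-continuous in their data, globally; the fiberwise decay $|\lambda_n(\omega)|\downarrow 0$ together with the uniform bound $\|T\|_\infty<\infty$ gives the order convergence $\pi_\infty|f_{\infty,n}|\downarrow 0$ in $\nabla$.

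The main obstacle will be the measurable selection of eigenvectors when eigenvalue multiplicities vary with $\omega$ on sets of positive measure. Spectral projections of $T(\omega)$ corresponding to disjoint spectral intervals are themselves bundle-measurable, and one further partitions $\nabla$ according to the multiplicity pattern; inside each piece, an orthonormal family of eigenvectors is chosen by an inductive cyclic Gram--Schmidt procedure that exploits the mixing properties (1)--(2) of the Hilbert--Kaplansky module. Once this measurable diagonalization is in place, properties (1)--(3) of the theorem are assembled directly, with $\pi_0$ absorbing the kernel of $T$, the real-valuedness of the $f_{k,n}$ reflecting the self-adjointness fiberwise, and the symmetric tensors $\xi_{k,n}\otimes\xi_{k,n}$ replacing the asymmetric ones $\xi_{k,n}\otimes\eta_{k,n}$ of (\ref{deco}).
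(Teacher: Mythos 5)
Your proposal is essentially correct in outline, but it takes a genuinely different route from the paper. You pass to a measurable-bundle realization of $X$ (via \cite{GK01}), apply the classical spectral theorem to each fiber $T(\omega)$, and then glue the fiberwise eigendata back into module data by a measurable selection argument. The paper never leaves the module: it starts from the canonical form \eqref{deco} of a cyclically compact operator, handles the positive case by the identity $T=|T|=\sqrt{TT^{\ast}}$ (a direct computation shows $TT^{\ast}=\sum f_n^2\,\xi_n\otimes\xi_n$, which symmetrizes the tensors), and then treats a general self-adjoint $T$ by writing $T=T_{+}-T_{-}$ and explicitly merging the two resulting expansions into a single orthonormal family with $|f_n|\downarrow 0$, using the idempotents $z_n=c((f_1^{-}-f_n^{+})_{+})$ to interleave the positive and negative eigenvalue sequences. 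The trade-off is clear: your approach is conceptually transparent (the theorem really is the classical one, fiber by fiber) and would generalize to other bundle-representable situations, but the entire technical weight falls on the measurable selection of eigenvectors under varying multiplicities, which you only sketch; this step is standard but not short (measurability of the ordered singular-value functions, measurability of spectral projections, a selection theorem or an explicit partition-and-Gram--Schmidt construction, and a check that the resulting sections land in $L^{\infty}(\Omega,H)$ rather than merely being measurable). The paper's argument avoids all selection machinery and is self-contained modulo \eqref{deco}, at the price of the somewhat delicate interleaving construction and the tacit induction hidden in the phrase ``continuing by this way.'' If you carry out the selection step in full, your proof would stand as a legitimate alternative.
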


\begin{proof} Let $T$ be an operator of the form~\eqref{deco}.
Without lost of generality we can assume that
$\pi_\infty=\mathbf{1},$ and therefore the operator $T$ has the
form  $T=T^\ast=\sum\limits_{n=1}^\infty f_n\, \eta_n
\otimes\xi_n.$

Case 1. Let $T\geq 0.$ Since $T=|T|=\sqrt{T^\ast T}=\sqrt{T
T^\ast}$ and
$$
T T^\ast=\sum\limits_{n=1}^\infty f_n\, \xi_n \otimes
\eta_n\sum\limits_{m=1}^\infty f_m\, \eta_m \otimes
\xi_m=\sum\limits_{n,m=1}^\infty f_nf_m\, \langle\eta_m,
\eta_n\rangle\xi_m \otimes \xi_n =\sum\limits_{n=1}^\infty f_n^2\,
\xi_n \otimes \xi_n
$$
we get
$$
T=\sum\limits_{n=1}^\infty f_n\, \xi_n \otimes \xi_n.
$$

Case 2. Let $T$ be an arbitrary self-adjoint cyclically compact
operator. Consider its  representation in the form $T=T_+-T_{-},$
where $T_+, T_{-}\geq 0$ and $T_+T_{-}=0.$ By case 1 there are
orthonormal families  $\{\xi_n^+\}_{n\in \mathbb{N}},$
$\{\xi_n^-\}_{n\in \mathbb{N}}$ in $X$ and  families
$\{f_n^+\}_{n\in \mathbb{N}},$ $\{f_n^-\}_{n\in \mathbb{N}}$ in
$L^\infty(\Omega)$ such that $f_n^+\downarrow 0,$ $f_n^-\downarrow
0$ and the following representations are valid
$$
T_+=\sum\limits_{n=1}^\infty f_n^+\, \xi_n^+ \otimes \xi_n^+,
$$
$$
T_{-}=\sum\limits_{n=1}^\infty f_n^{-}\, \xi_n^{-} \otimes
\xi_n^{-}.
$$
Since $T_+T_{-}=0$ we get that the elements $\xi_i^{+}$ and
$\xi_j^{-}$ are mutually orthogonal for all $i,j\in \mathbb{N}.$

Now consider the following sequence in  $\nabla:$
$$
z_n=c((f_1^{-}-f_{n}^+)_{+}),\, n\geq 1,
$$
where $f_+$ is the positive part of self-adjoint element $f$ from
$L^\infty(\Omega)$ and $c(f)$ it's support, i.e. $c(f)$ is the
indicator function of the set $\{\omega\in\Omega: f(\omega)\neq
0\}.$
Set
$$
f_1^{(1)}=-z_1 f_1^-+z_1^\perp f_1^+,
$$
$$
f_n^{(1)}=z_{n-1} f_{n-1}^+ - (z_n-z_{n-1})f_{1}^-+z_n^\perp
f_n^{+},\, n\geq 2,
$$
$$
\xi_1^{(1)}=z_1 \xi_1^-+z_1^\perp \xi_1^+,
$$
$$
\xi_n^{(1)}=z_{n-1} \xi_{n-1}^++ (z_n-z_{n-1})\xi_{1}^-+z_n^\perp
\xi_n^{+},\, n\geq 2.
$$
Let us check that $|f_{n}^{(1)}|\downarrow 0.$ Firstly we shall
show that $|f_{1}^{(1)}|\geq |f_{2}^{(1)}|.$ Indeed,
$$
z_1|f_{1}^{(1)}|=z_1f_{1}^{-}\geq z_1 f_{1}^{+}=z_1|f_{2}^{(1)}|
$$
and
$$
z_1^\perp |f_{1}^{(1)}|=z_1^\perp f_{1}^{+}=(z_2-z_1)f_1^+
+z_2^\perp f_{1}^{+}\geq (z_2-z_1)f_1^- +z_2^\perp
f_{2}^{+}=z_1^\perp |f_{2}^{(1)}|.
$$

Now let $n>1.$ Then
$$
z_{n-1}|f_{n}^{(1)}|=z_{n-1}f_{n-1}^{+}\geq z_{n-1}
f_{n}^{+}=z_{n-1} z_n f_{n}^{+}=z_{n-1}|f_{n+1}^{(1)}|,
$$
$$
(z_{n}-z_{n-1})|f_{n}^{(1)}|=(z_n-z_{n-1})f_{1}^{-}\geq
(z_n-z_{n-1}) f_{n}^{+}=(z_n-z_{n-1})|f_{n+1}^{(1)}|
$$
and
$$
z_n^\perp |f_{n}^{(1)}|=z_n^\perp f_{n}^{+}=(z_{n+1}-z_n)f_n^+
+z_{n+1}^\perp f_{n}^{+}\geq (z_{n+1}-z_n)f_1^- +z_{n+1}^\perp
f_{n+1}^{+}=z_{n}^\perp |f_{n+1}^{(1)}|.
$$
So $|f_{n}^{(1)}|\geq |f_{n+1}^{(1)}|.$ Since $f_{n}^{+}\downarrow
0$ and $z_{n}^{\perp}\downarrow 0$ we get $|f_{n}^{(1)}|\downarrow
0.$

Direct computations show that
$$
\langle\xi_1^{(1)}, \xi_1^{(1)}\rangle=z_1+z_1^\perp=\mathbf{1},
$$
$$
\langle\xi_n^{(1)},
\xi_n^{(1)}\rangle=z_{n-1}+(z_n-z_{n-1})+z_n^\perp=\mathbf{1},\,n>1,
$$
$$
\langle\xi_n^{(1)}, \xi_m^{(1)}\rangle=0,\,n\neq m.
$$
This means that  $\{\xi_n^{(1)}\}$ is an orthonormal system.

Set
$$
T_1=\sum\limits_{n=1}^\infty f_n^{(1)}\, \xi_n^{(1)} \otimes
\xi_n^{(1)}-\sum\limits_{n=2}^\infty f_n^{-}\, \xi_n^{-} \otimes
\xi_n^{-}.
$$
Let us show that $T=T_1.$ It clear that $T(\xi_k^-)=T_1(\xi_k^-)$
for all $k>1.$ Further
$$
T_1(\xi_1^-)=z_1T(\xi_1^-)+\sum\limits_{n=2}^\infty(z_n-z_{n-1})T(\xi_1^-)=T(\xi_1^-),
$$
$$
T_1(\xi_k^+)=z_kT(\xi_k^+)+z_k^\perp T(\xi_k^+)=T(\xi_k^+).
$$
This means that $T=T_1.$

Continuing by this way we obtain
$$
T=\sum\limits_{n=1}^\infty f_n\, \xi_n \otimes \xi_n.
$$
The proof is complete.
\end{proof}

\section{Applications to partial integral equations}

In this section we shall apply the main result of the previous
section to partial integral equations on the space with mixed norm
of measurable functions. For more information about the partial
integral operators and equations can be found in the monograph
\cite{AKZ}.

Let $(S, \Xi, \nu)$ be a measure space and let
$L^{2,\infty}(S\times\Omega)$ be the set of all complex-valued
measurable functions $f$ on $S\times\Omega$ such that
$$
\int\limits_{S}|f(s, \omega)|^2\, d\mu(s) \in L^\infty(\Omega).
$$
Then $L^{2,\infty}(S\times\Omega)$ is a Hilbert--Kaplansky module
over  $L^\infty(\Omega)$ with respect to inner product defined by
$$
\langle f, g\rangle=\int\limits_{S}f(s, \omega)\overline{g(s,
\omega)}\, d\mu(s),\, f,g\in L^{2,\infty}(S\times\Omega).
$$
Let us take  a complex-valued measurable function $k(t, s,
\omega)$ on $S^2\times\Omega$ such that
\begin{equation}\label{HS}
\int\limits_{S}\int\limits_{S}|k(t,s, \omega)|^2\, d\mu(t)\,
d\mu(s)\in L^\infty(\Omega)
\end{equation}
and define an operator $T : L^{2,\infty}(S\times\Omega)\rightarrow
L^{2,\infty}(S\times\Omega)$ by the rule
\begin{equation}\label{part}
 T(f)(t, \omega) =\int\limits_{S}k(t, s, \omega)f(s, \omega)
\,d\mu(s),\, f\in L^{2,\infty}(S\times\Omega).
\end{equation}
For any $\omega\in \Omega$ we put $k_\omega(t, s) = k(t, s,
\omega).$ Then \eqref{HS} means that the function $k_\omega(t, s)$
is a Hilbert--Schmidt kernel for almost all $\omega\in \Omega.$
For almost all $\omega\in\Omega$  the operator $T_\omega :
L^{2}(S)\rightarrow L^{2}(S)$ is defined by
$$
T_\omega(f_\omega)(t) =\int\limits_{S}k_\omega(t, s)f_\omega(s)
\,d\mu(s),\, f_\omega\in L^{2}(S).
$$
Since $T_\omega$ is an integral operator with Hilbert--Schmidt
kernel it follows that  $T_\omega$ is a compact operator for
almost all $\omega\in\Omega.$ For $f\in
L^{2,\infty}(S\times\Omega)$ we have
$$
T(f)(t, \omega) =\int\limits_{S}k(t, s, \omega)f(s, \omega)
\,d\mu(s) =\int\limits_{S}k_\omega(t,
s)f_\omega(s)=T_\omega(f_\omega)(t)
$$
for almost all $(t, \omega)\in S\times\Omega,$ where $f_\omega(s)
= f(s, \omega).$ This means that $\{T_\omega : \omega\in\Omega\}$
is a measurable bundle of compact operators (see \cite{GK01}).
Therefore, by \cite[Theorem 3]{GK01} the operator T is cyclically
compact.

Now suppose that $k(t,s,\omega)=\overline{k(s,t,\omega)}.$ Then
the operator $T$ defined by~\ref{part} is self-adjoint.
Theorem~\ref{MA} implies the following result.

\begin{theorem}\label{PIO}
There are partition  $\{\Omega_0, \Omega_1, \ldots,
\Omega_k,\ldots, \Omega_\infty\}$ of $\Omega$ and orthonormal
families $\{g_{k, n}\}_{n=1}^{k}$  in $\chi_{\Omega_k}
L^{2,\infty}(S\times\Omega)$ and a families $\{\lambda_{k,
n}\}_{n=1}^k$ in $\chi_{\Omega_k} L^\infty(\Omega),$ where $k=1,
\ldots, n,\ldots, \infty,$ such that the following hold:
\begin{enumerate}
\item[(1)] $\chi_{\Omega_0} T=0;$
\item[(2)] $\chi_{\Omega_\infty} |\lambda_{\infty, n}|\downarrow 0;$
\item[(3)] the representation is valid
\begin{equation*}\label{selpio}
T(f)=\chi_{\Omega_\infty}\sum\limits_{n=1}^\infty \lambda_{\infty,
n}\,\langle f, g_{\infty, n}\rangle g_{\infty,
n}+\sum\limits_{k=1}^\infty \chi_{\Omega_k} \sum\limits_{n=1}^k
\lambda_{k, n}\, \langle f, g_{k,n}\rangle g_{k,n}.
\end{equation*}
\end{enumerate}
\end{theorem}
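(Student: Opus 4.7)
The strategy is to invoke Theorem \ref{MA} directly on the partial integral operator $T$ acting on the Hilbert--Kaplansky module $X = L^{2,\infty}(S\times\Omega)$, and then to rewrite the abstract spectral data in the concrete measure-theoretic language of the present setting. The two hypotheses required by Theorem \ref{MA} are already in place in the paragraphs preceding the statement: self-adjointness of $T$ follows from the Hermitian symmetry $k(t,s,\omega) = \overline{k(s,t,\omega)}$ together with the Fubini theorem, and cyclical compactness of $T$ follows from \cite[Theorem 3]{GK01} since $\{T_\omega\}$ is a measurable bundle of Hilbert--Schmidt, hence compact, operators on $L^2(S)$.

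First I would fix the canonical bijection between the idempotent algebra $\nabla \subset L^\infty(\Omega)$ and the measure algebra of $(\Omega,\Sigma,\mu)$: every idempotent $\pi \in \nabla$ is the characteristic function $\chi_{\Omega_\pi}$ of an (a.e.\ unique) measurable subset $\Omega_\pi \subset \Omega$, a partition of unity in $\nabla$ corresponds to an essentially disjoint covering of $\Omega$, and the relation $\bigvee_i \pi_i = \mathbf{1}$ reads $\bigcup_i \Omega_i = \Omega$ modulo null sets. Applying Theorem \ref{MA} to $T$ then produces a partition of unity $\{\pi_0,\pi_1,\ldots,\pi_k,\ldots,\pi_\infty\}$, orthonormal systems $\{\xi_{k,n}\}_{n=1}^{k}$ in $\pi_k X$, and coefficient families $\{f_{k,n}\}_{n=1}^{k}$ in $\pi_k L^\infty(\Omega)$. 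Setting $\Omega_k := \Omega_{\pi_k}$, $g_{k,n} := \xi_{k,n}$ and $\lambda_{k,n} := f_{k,n}$ yields exactly the data required by the statement, with conditions (1) and (2) translating verbatim.

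Second, I would unwind the rank-one operators $\xi_{k,n} \otimes \xi_{k,n}$ using the definition from Section~2: for $f \in L^{2,\infty}(S\times\Omega)$,
\[
(\xi_{k,n} \otimes \xi_{k,n})(f) = \langle f, \xi_{k,n}\rangle\, \xi_{k,n} = \langle f, g_{k,n}\rangle\, g_{k,n},
\]
where the inner product is the $L^\infty(\Omega)$-valued integral $\int_S f(s,\omega)\overline{g_{k,n}(s,\omega)}\,d\mu(s)$. Substituting this into the decomposition \eqref{selfd} given by Theorem \ref{MA} produces precisely the formula in item (3).

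Since each step is essentially a dictionary translation between the abstract Hilbert--Kaplansky framework and the concrete $L^{2,\infty}$ setting, I do not foresee a serious obstacle. The only point requiring some care is to verify that the identification of $\nabla$ with measurable subsets is carried out consistently with the conditions $\xi_{k,n} \in \pi_k X$ and $f_{k,n} \in \pi_k L^\infty(\Omega)$, which amounts to checking that the functions $g_{k,n}$ and the multipliers $\lambda_{k,n}$ vanish outside $\Omega_k$; this is automatic from the module-linearity of $T$ together with the explicit form of $\pi_k X$.
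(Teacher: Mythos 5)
Your proposal is correct and follows exactly the route the paper intends: the paper offers no separate proof of this theorem, simply noting that it follows from Theorem~\ref{MA} once the preceding paragraphs have established that $T$ is self-adjoint and cyclically compact. Your explicit translation of idempotents to characteristic functions and of the rank-one operators $\xi_{k,n}\otimes\xi_{k,n}$ to the inner-product form fills in precisely the routine dictionary the paper leaves implicit.
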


Let us consider the following partial integral equation
\begin{equation}\label{pie}
\int\limits_{S}k(t, s, \omega)f(s, \omega)
\,d\mu(s)=\lambda(\omega) f(t,\omega),
\end{equation}
where $\lambda(\omega)\in L^\infty(\Omega),\, f\in
L^{2,\infty}(S\times\Omega).$

Theorem~\ref{PIO} implies the following condition of solvability
of partial integral equation with self-adjoint kernel.

\begin{corollary}\label{PIO}
Suppose that $k(t,s,\omega)=\overline{k(s,t,\omega)}.$ Then
partial integral equation \eqref{pie} is solvable if and only if
there exists a non zero $\pi\in \nabla$ and $\lambda_{k,n}$ such
that $\pi\lambda=\pi\lambda_{k,n}.$
\end{corollary}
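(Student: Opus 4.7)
The plan is to read the partial integral equation~\eqref{pie} as the eigenvalue equation $Tf=\lambda f$ in the Hilbert--Kaplansky module $L^{2,\infty}(S\times\Omega)$, and then compare $f$ against the spectral decomposition of $T$ furnished by Theorem~\ref{PIO}.

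For the sufficiency direction, given a nonzero $\pi\in\nabla$ and some $\lambda_{k,n}$ with $\pi\lambda=\pi\lambda_{k,n}$, I would try $f:=\pi\,g_{k,n}$ as the candidate solution. Since $\lambda_{k,n}$ lives in $\chi_{\Omega_k}L^\infty(\Omega)$, the hypothesis forces the support of $\pi$ to lie inside $\Omega_k$, so $f$ is a genuine nonzero element. A direct computation using the decomposition together with the orthonormality of $\{g_{k,n}\}$ and the disjointness of the projections $\chi_{\Omega_k}$ yields $Tg_{k,n}=\lambda_{k,n}g_{k,n}$. The $L^\infty(\Omega)$-linearity of $T$ then gives $Tf=\pi\lambda_{k,n}g_{k,n}=\pi\lambda g_{k,n}=\lambda f$, which is~\eqref{pie}.

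For the necessity direction, suppose $f\neq 0$ satisfies $Tf=\lambda f$. The key step is to pair both sides with each $g_{k,n}$: by orthonormality the decomposition collapses $\langle Tf,g_{k,n}\rangle$ into $\lambda_{k,n}\langle f,g_{k,n}\rangle$, while the right-hand side gives $\lambda\langle f,g_{k,n}\rangle$. Hence
\[
(\lambda-\lambda_{k,n})\,\langle f,g_{k,n}\rangle=0
\]
for every index $(k,n)$. If for some $(k,n)$ the Fourier-type coefficient $\langle f,g_{k,n}\rangle$ has nonzero support projection $\pi\in\nabla$, I take that $\pi$ and conclude $\pi\lambda=\pi\lambda_{k,n}$, as required.

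The main obstacle is the residual case in which $\langle f,g_{k,n}\rangle=0$ for every $(k,n)$. In this situation the decomposition forces $Tf=0$, so $\lambda f=0$ with $f\neq 0$, and taking $\pi$ to be the support projection of $f$ yields a nonzero $\pi\in\nabla$ on which $\pi\lambda=0$. To match the stated conclusion this zero has to be read as $\pi\lambda_{k,n}$ for some $\lambda_{k,n}$ vanishing on $\pi$, which is natural in view of item~(1) of Theorem~\ref{PIO} and of the fact that $|\lambda_{\infty,n}|\downarrow 0$; still, handling this edge case cleanly is the point where the proof requires the most care.
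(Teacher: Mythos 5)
The paper itself gives no written proof of this corollary (it is stated as an immediate consequence of the spectral representation in Theorem~\ref{PIO}), and your strategy --- reading \eqref{pie} as $Tf=\lambda f$ and testing it against the decomposition --- is exactly the intended derivation. The two core computations are sound: $T g_{k,n}=\lambda_{k,n}g_{k,n}$ follows from orthonormality of $\{g_{k,n}\}_{n=1}^{k}$ inside $\chi_{\Omega_k}L^{2,\infty}(S\times\Omega)$ together with the disjointness of the projections $\chi_{\Omega_k}$, and in the converse direction pairing $Tf=\lambda f$ with $g_{k,n}$ does yield $(\lambda-\lambda_{k,n})\langle f,g_{k,n}\rangle=0$, so that the support projection of a nonvanishing coefficient $\langle f,g_{k,n}\rangle$ gives the required $\pi$.

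Two points, however, do not go through as written. First, the assertion that $\pi\lambda=\pi\lambda_{k,n}$ ``forces the support of $\pi$ to lie inside $\Omega_k$'' is false: since $\lambda_{k,n}\in\chi_{\Omega_k}L^\infty(\Omega)$, on the part of $\pi$ outside $\Omega_k$ the hypothesis only says $\pi\lambda=0$ there, and it can hold with $\pi\wedge\chi_{\Omega_k}$ arbitrarily small or even zero. You must replace $\pi$ by $\pi':=\pi\chi_{\Omega_k}$, and your candidate $f=\pi g_{k,n}$ only works when $\pi'\neq 0$. Second, the residual cases you honestly flag (namely $\pi\chi_{\Omega_k}=0$ in the sufficiency direction, and $\langle f,g_{k,n}\rangle=0$ for all $(k,n)$ in the necessity direction) are not merely delicate --- they cannot be closed from the statement as literally formulated. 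Both reduce to the question of whether ``$\lambda$ vanishes on a nonzero band'' is equivalent to ``$T$ has a nontrivial kernel on that band,'' and this fails in general: on $\chi_{\Omega_\infty}L^{2,\infty}(S\times\Omega)$ the family $\{g_{\infty,n}\}$ may be a basis with every $\lambda_{\infty,n}$ of full support, so that $Tf=0$ has no nonzero solution there, while the condition $\pi\lambda=\pi\lambda_{k,n}=0$ is nevertheless satisfied vacuously by any $\lambda_{k,n}$ whose band $\Omega_k$ is disjoint from $\pi$. So the gap you point to is real, but it is a defect of the corollary's formulation (the condition should be read with $0\neq\pi\leq\chi_{\Omega_k}$) rather than of your method; under that reading your argument is complete.
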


\begin{remark}
Suppose that $(\Omega, \Sigma, \mu)$ is an atomless measure space.
Then the operator $T$ defined by~\eqref{part} is compact if and
only if $k(t,s,\omega)\equiv 0.$ Indeed, suppose that
$k(t,s,\omega)\neq  0.$ Since $T$ is a compact operator, it
follows that there exists an eigenvalue $\lambda$ of $T.$ Note
that the  subspace of eigenvectors corresponding to $\lambda$ is
finite dimensional.

On the other hand, since the operator $T$ is
$L^\infty(\Omega)$-linear, we conclude that the  subspace of
eigenvectors corresponding to $\lambda$ is a non trivial
$L^\infty(\Omega)$-module, in particular, it is an infinite
dimensional, because $L^\infty(\Omega)$ is an infinite
dimensional. This contradiction implies that $k(t,s,\omega)\equiv
0.$
\end{remark}

\section{Self-adjoint compact operators in type I von Neumann algebras}

In this section we shall apply the main result of the  section 2
to compact operators relative von Neumann algebras of type I. For
more information about the compact operators relative to von
Neumann algebras  can be found in  \cite{Kaf}.

An operator $x\in M$ is \textit{compact relative} to $M,$ if it is
the limit in the norm of finite operators in $M,$ i.e. of
operators for which the left support
$$
l(y)=\inf\{p\in P(M): py=y\}
$$
is finite.

\begin{theorem}\label{main}
Let $M$ be a type I von Neumann algebra and let $x$ be a compact
operator relative to  $M.$ If $x$ is self-adjoint then there are a
sequence of mutually orthogonal central projections  $\{z_0, z_1,
\ldots, z_k,\ldots, z_\infty\}$ in $M$ and the families of
mutually orthogonal abelian projections $\{p_{k, n}\}_{n=1}^{k}$
in $z_k M$ and a families of central elements $\{f_{k,
n}\}_{n=1}^k$ in $z_k M,$ where $k=1, \ldots, n,\ldots, \infty,$
such that the following hold:
\begin{enumerate}
\item[(1)] $z_0 x=0;$
\item[(2)] $z_\infty |f_{\infty, n}|\downarrow 0;$
\item[(3)] the representation is valid
\begin{equation}\label{selvn}
x=z_\infty\sum\limits_{n=1}^\infty f_{\infty, n}\, p_{\infty,
n}+\sum\limits_{k=1}^\infty z_k \sum\limits_{n=1}^k f_{k, n}
p_{k,n}.
\end{equation}
\end{enumerate}
\end{theorem}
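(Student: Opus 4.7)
The plan is to reduce Theorem~\ref{main} to the spectral decomposition already established in Theorem~\ref{MA} by invoking the identification, mentioned in the introduction (cf.~\cite{Alb2, KusDom}), of a type I von Neumann algebra with the algebra of bounded module-linear operators on a Hilbert--Kaplansky module over its center.

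First I would set up the dictionary. Writing $Z(M)\cong L^\infty(\Omega)$, there is a Hilbert--Kaplansky $L^\infty(\Omega)$-module $X$ such that $M$ can be identified with the algebra of all bounded $L^\infty(\Omega)$-linear operators on $X$. Under this identification, central projections of $M$ correspond exactly to the idempotents of $\nabla$, and a rank-one module operator $\xi\otimes\xi$ with $\langle\xi,\xi\rangle=\mathbf{1}$ is a self-adjoint idempotent whose corner $(\xi\otimes\xi)M(\xi\otimes\xi)$ is isomorphic to $Z(M)$, hence an abelian projection in $M$. Moreover, if $\{\xi_n\}$ is orthonormal then the projections $\xi_n\otimes\xi_n$ are pairwise orthogonal, via the identity $(\xi\otimes\xi)(\eta\otimes\eta)=\langle\eta,\xi\rangle\,\xi\otimes\eta$.

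Second, I would verify that operators compact relative to $M$ in the sense of Kaftal correspond precisely to cyclically compact module operators on $X$. A finite operator $y\in M$ in the sense of the definition before the theorem has its left support dominated by a finite projection; in a type I algebra this forces $y$ to be expressible as a finite sum $\sum_{n=1}^N \xi_n\otimes\eta_n$, and the norm closure of such finite-rank module operators is exactly the class of cyclically compact operators, by \cite[Theorem 8.5.6]{KusDom}. In particular, a self-adjoint $x\in M$ compact relative to $M$ corresponds to a self-adjoint cyclically compact operator $T$ on $X$.

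Finally, applying Theorem~\ref{MA} to $T$ yields a partition of unity $\{\pi_0,\pi_1,\ldots,\pi_\infty\}$ in $\nabla$, orthonormal families $\{\xi_{k,n}\}_{n=1}^{k}$ in $\pi_k X$, and families $\{f_{k,n}\}_{n=1}^{k}$ in $\pi_k L^\infty(\Omega)$ that realize the decomposition~\eqref{selfd} with $\pi_0T=0$ and $\pi_\infty|f_{\infty,n}|\downarrow 0$. Translating back, I would let $z_k$ be the central projection in $M$ corresponding to $\pi_k$, let $p_{k,n}$ be the abelian projection corresponding to $\xi_{k,n}\otimes\xi_{k,n}$, and identify the $f_{k,n}$ as central elements of $z_k M$; this produces exactly the representation~\eqref{selvn}. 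The main obstacle is the dictionary step: one must check that Kaftal's norm-approximation definition of compactness relative to $M$ really does match the cyclic-compactness condition on $X$ (both directions), and that the left support being a finite projection in $M$ translates to the image being finitely generated over $L^\infty(\Omega)$; once this translation is justified, the conclusion is essentially a relabeling of Theorem~\ref{MA}.
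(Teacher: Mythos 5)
Your overall architecture coincides with the paper's: identify $M$ with the algebra of bounded $L^\infty(\Omega)$-linear operators on a Hilbert--Kaplansky module, show that compactness relative to $M$ yields cyclical compactness, apply Theorem~\ref{MA}, and observe via $(\xi\otimes\xi)(\eta\otimes\eta)=\langle\eta,\xi\rangle\,\xi\otimes\eta$ and Kaplansky's lemma that the operators $\xi_{k,n}\otimes\xi_{k,n}$ are mutually orthogonal abelian projections. However, the step you yourself flag as the main obstacle is carried out with claims that are false. A finite operator $y\in M$ (one whose left support is a finite projection) is \emph{not} in general a finite sum $\sum_{n=1}^N\xi_n\otimes\eta_n$: when the center is infinite dimensional, a finite projection need only be $\sigma$-finite-generated, i.e.\ there is a partition of unity $\{z_n\}$ in the center such that $z_n p$ has an $n$-element module basis, and the range of $p$ need not be finitely generated over $L^\infty(\Omega)$. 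This is precisely the content of the paper's Lemma~\ref{fir}. Correspondingly, the norm closure of the finite-rank module operators is \emph{strictly smaller} than the class of cyclically compact operators: for $p=\sum_n z_n q_n$ with $q_n$ a projection of module rank $n$, one checks $\|p-F\|\geq 1$ for every finite-rank module operator $F=\sum_{i=1}^N\xi_i\otimes\eta_i$ (on the piece $z_{N+1}$ the fibrewise rank of $F$ is at most $N$ while $p$ has fibrewise rank $N+1$), yet $p$ is a finite operator in Kaftal's sense, hence compact relative to $M$, and is cyclically compact. So the asserted chain ``compact relative to $M$ $\Leftrightarrow$ norm limit of finite-rank module operators $\Leftrightarrow$ cyclically compact'' breaks at both links.

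The repair is the paper's route: first prove that a projection is finite if and only if it is $\sigma$-finite-generated (Lemma~\ref{fir}, via the decomposition of a finite type I algebra into homogeneous summands), then invoke Kaftal's characterization (\cite[Theorem 1.3]{Kaf}): for each $n$ there is a projection $p_n$ with $\|xp_n\|<1/n$ and $\mathbf{1}-p_n$ finite. Each $\mathbf{1}-p_n$ is then $\sigma$-finite-generated, hence cyclically compact (it is a \emph{mixing} of finite-rank projections, not a finite-rank operator), so $x$ is a norm limit of the cyclically compact operators $x(\mathbf{1}-p_n)$ and is itself cyclically compact. Only this one implication is needed; you should not attempt the converse equivalence. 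Note also that the identification $M\cong B(L^{\infty}(\Omega,H))$ is available only for homogeneous algebras, so for a general type I algebra one must first decompose $M$ into homogeneous summands, as the paper does in the last step of its proof.
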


For the proof of this theorem we need several lemmata.

Consider  a Hilbert space   $H.$   A mapping $s:\Omega\rightarrow
H$ is called \textit{simple,} if
$s(\omega)=\sum\limits_{k=1}^{n}\chi_{A_{k}}(\omega)c_k,$ where
$A_k\in\Sigma, A_i\cap A_j=\emptyset, \,i\neq j,\, \,c_k\in
H,\,k=\overline{1, n},\, n\in\mathbb{N}.$ A mapping
$u:\Omega\rightarrow H$ is said to be \textit{measurable,} if for
each $A\in\sum$ with $\mu(A)<\infty$ there is a sequence $\{s_n\}$
of simple maps such that $\|s_n(\omega)-u(\omega)\|\rightarrow0$
almost everywhere on  $A.$

Denote by $\mathcal{B}(\Omega, H)$ the set of all bounded
measurable mappings from $\Omega$ into $H,$ and let $L^\infty
(\Omega, H)$ denote the space of all equivalence classes  with
respect to the equality almost everywhere. The equivalence class
from $L^\infty(\Omega, H)$ which contains the measurable map
$\xi\in \mathcal{B}(\Omega, H)$ denotes as  $\widehat{\xi}.$ We
shall identify the element $\xi\in \mathcal{B}(\Omega, H)$ and the
class $\widehat{\xi}.$ It is clear that the function  $\omega
\rightarrow \|\xi(\omega)\|$ is measurable for all  $\xi\in
\mathcal{B}(\Omega, H).$ Denote by $\|\widehat{\xi}\|$ the
equivalence class containing the function $\|\xi(\omega)\|.$ The
algebraic operations on $L^\infty(\Omega, H)$ defined by usual
way: $\widehat{\xi}+\widehat{\eta}=\widehat{\xi+\eta},
a\widehat{\xi}=\widehat{a \xi}$ for all $\widehat{\xi},
\widehat{\eta} \in L^\infty(\Omega, H), a\in L^\infty(\Omega).$
Let us consider  $L^\infty(\Omega)$-valued inner product
$$
\langle \xi, \eta \rangle=\langle \xi(\omega), \eta(\omega)
\rangle_{H},
$$
where  $\langle \cdot, \cdot\rangle_{H}$ is the inner product in
$H.$ Then $L^\infty(\Omega, H)$ is a Hilbert~--Kaplansky module
over $L^\infty(\Omega).$

It is known \cite{Alb2} that $\alpha$-dimensional Hilbert space
$H$ the Hilbert -- Kaplansky module $L^{\infty}(\Omega, H)$ is
$\alpha$-homogeneous.

Let   $B(L^{\infty}(\Omega, H))$ be the algebra of all bounded
$L^{\infty}(\Omega)$-linear operators on  $L^{\infty}(\Omega, H).$
Taking into account that $L^{\infty}(\Omega, H)$ is a Hilbert --
Kaplansky module over  $L^{\infty}(\Omega)$ we get that
$B(L^{\infty}(\Omega, H))$ is an $AW^\ast$-algebra of type  I with
the center is $\ast$-isomorphic to  $L^{\infty}(\Omega).$ Suppose
that $\dim H=\alpha.$ Then $L^{\infty}(\Omega, H)$ is
$\alpha$-homogeneous and by \cite[Theorem 7]{Kap}  the algebra
$B(L^{\infty}(\Omega, H))$ has the type  I$_\alpha.$ The center
$Z(B(L^{\infty}(\Omega, H)))$ of this $AW^{\ast}$-algebra
isomorphic with the algebra $L^{\infty}(\Omega)$ which is a von
Neumann algebra,  and thus by \cite[Theorem 2]{Kap2}
$B(L^{\infty}(\Omega, H))$ is also a von Neumann algebra.
Consequently, if  $\dim H=\alpha$ then  $B(L^{\infty}(\Omega, H))$
is a of type I$_\alpha$ von Neumann algebra.

Now let us consider an arbitrary of type I$_{\alpha}$ homogeneous
von Neumann algebra $M$   with the center isomorphic to
$L^{\infty}(\Omega).$ Taking into account that two von Neumann
algebras of the same type I$_{\alpha}$ with isomorphic centers are
mutually $\ast$-isomorphic, we conclude that that the algebra $M$
is $\ast$-isomorphic to the algebra $B(L^{\infty}(\Omega, H)),$
where $\dim H=\alpha.$

A projection $p\in B(L^{\infty}(\Omega, H))$ is called
$\sigma$-finite-generated if $p(L^{\infty}(\Omega, H))$ is a
$\sigma$-finite-generated module.

\begin{lemma}\label{fir}
A projection  $p\in M\cong B(L^{\infty}(\Omega, H))$ is finite if
and only if it is $\sigma$-finitely-generated.
\end{lemma}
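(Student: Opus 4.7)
The plan is to prove both implications by relating the module decomposition of $pL^{\infty}(\Omega, H)$ to the standard central decomposition of a finite projection in a homogeneous type I von Neumann algebra. Since $M\cong B(L^{\infty}(\Omega, H))$ is of type I$_\alpha$ with centre $L^{\infty}(\Omega)$, abelian projections in $M$ correspond precisely to rank-one operators of the form $\xi\otimes\xi$ for a unit vector $\xi\in L^{\infty}(\Omega, H)$, and it is this dictionary between abelian projections and one-dimensional cyclic submodules that carries the whole argument.

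For the easy direction \emph{$\sigma$-finitely-generated $\Rightarrow$ finite}, I would start from a partition of unity $\{\pi_n\}_{n\in F}$ in $\nabla$ such that each $\pi_n\bigl(p L^{\infty}(\Omega, H)\bigr)$ is $n$-homogeneous. For each $n\in F$ I would pick an orthonormal basis $\{\xi_{n,1},\ldots,\xi_{n,n}\}$ of that submodule and observe that $\pi_n p=\sum_{i=1}^{n}\xi_{n,i}\otimes\xi_{n,i}$; this exhibits $\pi_n p$ as a sum of $n$ abelian projections in the reduced algebra $\pi_n M$ and hence as a finite projection there. Finiteness of $p$ in $M$ would then follow from the standard central-patching argument: if $q\leq p$ is Murray--von Neumann equivalent to $p$ in $M$, then $\pi_n q\leq\pi_n p$ with $\pi_n q\sim\pi_n p$ in the finite algebra $\pi_n M$, so $\pi_n q=\pi_n p$ for every $n\in F$, and summing over the partition gives $q=p$.

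For the converse, I would invoke the type I$_\alpha$ structure of $M$ to obtain a central family $\{\pi_n\}_{n=1}^{\infty}\subset\nabla$ on which $p$ has constant multiplicity $n$, meaning that $\pi_n p$ is a sum of $n$ mutually orthogonal equivalent abelian projections of $\pi_n M$; the residual central projection $\pi_0:=\bigl(\bigvee_{n\geq 1}\pi_n\bigr)^{\perp}$ satisfies $\pi_0 p=0$, and finiteness of $p$ is precisely what guarantees that no infinite-multiplicity part is left, so that the family $\{\pi_n\}_{n\geq 0}$ is a partition of unity. On each $\pi_n$ with $n\geq 1$ the $n$ unit vectors $\xi_{n,1},\ldots,\xi_{n,n}$ arising from the abelian subprojections form an orthonormal basis of $\pi_n\bigl(p L^{\infty}(\Omega, H)\bigr)$, so that submodule is $n$-homogeneous; on $\pi_0$ the submodule is trivial. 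Hence $pL^{\infty}(\Omega, H)$ is $\sigma$-finitely-generated.

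The main obstacle is the converse direction, where one has to produce the central multiplicity decomposition of a finite projection in a type I$_\alpha$ von Neumann algebra and then verify that each abelian subprojection can be identified with a rank-one cyclic submodule $L^{\infty}(\Omega)\cdot\xi_{n,i}$ of $L^{\infty}(\Omega, H)$. Both ingredients are standard in the structure theory of type I algebras and in the theory of Hilbert--Kaplansky modules, but the bookkeeping needed to check that the resulting $n$ sections genuinely generate $\pi_n p L^{\infty}(\Omega, H)$ as an $n$-homogeneous module over $\pi_n L^{\infty}(\Omega)$ is where the real content of the argument lies.
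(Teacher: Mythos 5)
Your proof is correct and follows essentially the same route as the paper: both directions reduce to the central multiplicity/homogeneity decomposition, identifying $\pi_n pMp$ with $B\bigl(\pi_n p L^{\infty}(\Omega,H)\bigr)$ of type I$_n$ and matching $n$-homogeneous submodules with sums of $n$ abelian projections. The only minor imprecision is your phrase ``in the finite algebra $\pi_n M$'' --- $\pi_n M$ need not be finite; the patching argument rests on finiteness of the projection $\pi_n p$ itself, which your sum-of-abelian-projections observation already supplies.
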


\begin{proof}
''if'' part. Let $p\in M$ be a finite projection.

Case 1. Let $p$ be a projection such that $pMp$ is a
$n$-homogeneous von Neumann algebra. Then $p\in pMp\equiv
B(p(L^{\infty}(\Omega, H)))$ is also $n$-homogeneous algebra. This
implies that $p(L^{\infty}(\Omega, H))$ is a $n$-homogeneous
Hilbert--Kaplansky module. This means that $p$ is a
$n$-homogeneous projection. In particular, $p$ is a
$\sigma$-finite-generated projection.

Case 2. Let $p$ be an arbitrary finite projection. Then there
exists a system of mutually orthogonal  central  projections
$(q_{n})_{n\in F}\subset\mathcal{P}(pMp),$ where $F\subseteq
\mathbb{N},$ with $\sum\limits_{n\in F}q_{n}=p$ such that
$q_{n}pMp$ is a homogeneous von Neumann algebra of type I$_n.$ By
case 1 we get that $q_n p$ is $n$-homogeneous for all $n\in F.$
Thus $p=\sum\limits_{n\in F}q_n p$ is $\sigma$-finite-generated.

''only if'' part. Let $p$ be a $\sigma$-finite-generated
projection. Then   there exists a partition of unity
$\{z_n\}_{n\in F}$  in $\nabla,$ where $F\subseteq \mathbb{N},$
such that $z_n p(L^\infty(\Omega, H))$ is a $n$-homogeneous module
over $z_n L^\infty(\Omega)$  for all $n\in F.$ Therefore
$z_npMp\cong B(z_n p(L^\infty(\Omega, H))$ is  a homogeneous von
Neumann algebra of type I$_n$ for all $n\in F.$ Thus
$p=\sum\limits_{n\in F}q_n p$ is finite. The proof is complete.
\end{proof}

\begin{lemma}\label{cyc}
Let  $M\equiv B(L^{\infty}(\Omega, H)).$ If $x\in M$ is a compact
operator relative $M$ then it is cyclically compact.
\end{lemma}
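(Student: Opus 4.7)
The plan is to split the proof into two independent assertions: (a) every finite operator in $M$ is cyclically compact, and (b) the collection of cyclically compact operators on $L^{\infty}(\Omega,H)$ is norm-closed in $B(L^{\infty}(\Omega,H))$. Granting these, the lemma follows immediately, since a compact operator relative to $M$ is by definition a uniform limit of finite operators.

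For (a), I would fix a finite $y\in M$ and set $p=l(y)$. By Lemma~\ref{fir} the projection $p$ is $\sigma$-finite-generated, so there exists a partition of unity $\{z_{n}\}_{n\in F}$ in $\nabla$ with $F\subseteq\mathbb{N}$ such that each $z_{n}pL^{\infty}(\Omega,H)$ is $n$-homogeneous, that is, $L^\infty(\Omega)$-isomorphic to $(z_{n}L^{\infty}(\Omega))^{n}$. Since the range of $y$ is contained in $pL^{\infty}(\Omega,H)$, any norm-bounded subset $C\subseteq L^{\infty}(\Omega,H)$ has image $y(C)$ sitting inside a bounded subset of $pL^{\infty}(\Omega,H)$. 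I would then invoke the cyclic analogue of Heine--Borel in $\sigma$-finite-generated modules: on each $n$-homogeneous piece a norm-bounded sequence reduces coordinatewise to bounded sequences in $z_{n}L^{\infty}(\Omega)$, and a measurable-selection argument (using the direct sum property of $\mu$) produces a cyclic subsequence norm-convergent in $\|\cdot\|_{\infty}$. Mixing the subsequences obtained on the $z_{n}$-pieces, together with the vanishing piece where $y=0$, yields a cyclic subsequence of $y(C)$ converging in $\|\cdot\|_{\infty}$. This shows $y$ is cyclically compact.

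For (b), I would run the standard diagonal argument of \cite{KusDom}. Given $y_{k}\to x$ in operator norm with each $y_{k}$ cyclically compact and a bounded sequence $(\xi_{m})$ in $L^{\infty}(\Omega,H)$, iteratively extract cyclic subsequences from $(y_{k}\xi_{m})_{m}$ for successive $k$ and take a cyclic diagonal. The uniform estimate $\|y_{k}-x\|_{\infty}\to 0$ upgrades the diagonal's images under the $y_{k}$'s to an $\|\cdot\|_{\infty}$-Cauchy sequence of images under $x$, and completeness of $L^{\infty}(\Omega,H)$ in $\|\cdot\|_{\infty}$ yields the required limit, establishing cyclic compactness of $x$.

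The chief obstacle is the cyclic Heine--Borel step inside (a): showing that a $\|\cdot\|_{\infty}$-bounded sequence in an $n$-homogeneous Hilbert--Kaplansky module admits a cyclic subsequence norm-convergent in $\|\cdot\|_{\infty}$. This reduces to the one-dimensional statement in $L^{\infty}(\Omega)$, where the direct sum property of $\mu$ together with a measurable-selection argument applied fibrewise produces, for each tolerance $\varepsilon>0$, a suitable partition of unity and a choice of indices making the mixing $\varepsilon$-close to a fixed element; iterating along $\varepsilon=1/k$ and diagonalising gives the desired cyclic subsequence.
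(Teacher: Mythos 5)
Your proof is correct and follows essentially the same route as the paper: both arguments reduce to the facts that finite projections are $\sigma$-finite-generated (Lemma~\ref{fir}), that operators whose range lies in a $\sigma$-finite-generated submodule are cyclically compact (the cyclic Heine--Borel property), and that the cyclically compact operators form a norm-closed set. The only cosmetic difference is that the paper invokes Kaftal's Theorem~1.3 to produce projections $p_n$ with $\|xp_n\|<1/n$ and $\mathbf{1}-p_n$ finite and then approximates $x$ by $x(\mathbf{1}-p_n)$, whereas you work directly from the definition of relative compactness as a norm limit of operators with finite left support; you also sketch the Heine--Borel and norm-closedness steps that the paper merely asserts.
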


\begin{proof}
Let   $x\in M$ be  a compact operator relative $M.$ By
\cite[Theorem 1.3]{Kaf} for every $n\in \mathbb{N}$ there exists a
projection $p_n\in M$ such that $\|xp_n\|<1/n$ and
$\mathbf{1}-p_n$ is finite. By Lemma~\ref{fir} $\mathbf{1}-p_n$ is
$\sigma$-finite-generated. Therefore $\mathbf{1}-p_n$ is a
cyclically compact operator on $L^\infty(\Omega, H).$ Thus
$x(\mathbf{1}-p_n)$ is also a cyclically compact operator on
$L^\infty(\Omega, H).$ Since
$\|x-x(\mathbf{1}-p_n)\|=\|xp_n\|<1/n$ we obtain that $x$ is also
a cyclically compact operator on $L^\infty(\Omega, H).$ The proof
is complete.
\end{proof}

\textit{Proof of Theorem~\ref{main}}. Firstly, we will consider a
case homogeneous von Neumann algebra. In this case by
Lemma~\ref{cyc} $x$ is a cyclically compact operator. Therefore by
Theorem~\ref{selfd} we can assume that without lost of generality
it has the following form
\begin{equation*}
x=\sum\limits_{k=1}^\infty f_k\, \xi_k \otimes \xi_k.
\end{equation*}
According to  \cite[Lemma 13]{Kap} we obtain that
$p_k=\xi_k\otimes \xi_k$ is an abelian projection for all $k.$ So
\begin{equation*}
x=\sum\limits_{k=1}^\infty f_k\, p_k.
\end{equation*}

Now if $M$ is an arbitrary von Neumann algebra of type I then we
can consider the decomposition of the algebra $M$ to homogeneous
summands and apply the above assertion.  The proof is complete.

\begin{remark} If $M$ is a type I$_n,\, n<\infty,$ von Neumann
algebra which represented as $n\times n$-matrix algebra over its
center, then Theorem~\ref{main} gives us that any self-adjoint
element from $M$ can be represented as diagonal matrix (cf.
\cite{FT}).

\end{remark}

\section*{Acknowledgments}

The authors acknowledge the MOHE grant ERGS13-024-0057 for support.
The second named author (K.K.) also thanks International Islamic
University Malaysia for kind hospitality and providing all
facilities.

\end{document}